\declaretheorem[name=Theorem]{thm}
\declaretheorem[name=Proposition,sibling=thm]{prop}
\declaretheorem[name=Lemma,sibling=thm]{lem}
\declaretheorem[name=Definition,sibling=thm]{defn}
\declaretheorem[name=Corollary,sibling=thm]{cor}
\newcommand{\conv}{\operatorname{conv}}
\newcommand{\xc}{\operatorname{xc}}
\newcommand{\R}{\mathbb{R}}
\newcommand{\Z}{\mathbb{Z}}
\newcommand{\AND}{\wedge}
\newcommand{\OR}{\vee}
\newcommand{\NOT}{\neg}
\newcommand{\pitch}{p}
\newcommand{\notch}{\nu}
\newcommand{\supp}{\operatorname{supp}}
\title{Strengthening Convex Relaxations of 0/1-Sets Using Boolean Formulas}
\author{Samuel Fiorini \and Tony Huynh \and Stefan Weltge}
\begin{document}

\begin{abstract}
    In convex integer programming, various procedures have been developed to strengthen convex relaxations of sets of integer points.
    On the one hand, there exist several general-purpose methods that strengthen relaxations without specific knowledge of the set $ S $ of feasible integer points, such as popular linear programming or semi-definite programming hierarchies.
    On the other hand, various methods have been designed for obtaining strengthened relaxations for very specific sets $ S $ that arise in combinatorial optimization.

    We propose a new efficient method that interpolates between these two approaches.
    Our procedure strengthens any convex set containing a set $ S \subseteq \{0,1\}^n $ by exploiting certain additional information about $ S $.
    Namely, the required extra information will be in the form of a Boolean formula defining the target set $ S $.

    The aim of this work is to analyze various aspects regarding the strength of our procedure.
    As one result, interpreting an iterated application of our procedure as a hierarchy, our findings simplify, improve, and extend previous results by Bienstock and Zuckerberg on covering problems.
\end{abstract}

\maketitle

\section{Introduction}
In convex integer programming, there exist various procedures to strengthen convex relaxations of sets of integer points.
Formally, given a set $ S \subseteq \Z^n $ of integer points and a convex set $ Q \subseteq \R^n $ with $ Q \cap \Z^n = S $, these methods aim to construct a new convex set $ f(Q) $ satisfying $ S \subseteq f(Q) \subseteq Q $.
While the main goal is to find a set $ f(Q) $ that is a better approximation of the convex hull of $ S $ than $ Q $, such procedures are usually required to be efficient in the following sense: given an (linear or semidefinite) extended formulation for $ Q $, there should be an efficient way to construct an extended formulation for $ f(Q) $.

On the one hand, there exist several general-purpose methods that strengthen relaxations \emph{without specific knowledge} of the set $ S $, in a systematic way.
The hierarchies of Sherali \& Adams~\cite{SA1990}, Lov\'asz \& Schrijver~\cite{LS1991}, and Lasserre~\cite{Lasserre2001}, which are tailored to $ 0/1 $-sets $ S \subseteq \{0,1\}^n $, are methods of that type.
Another well-studied procedure is to map the relaxation $ Q $ to its Chv\'atal-Gomory closure $ f(Q) $.
However, in general the latter is not efficient in the above sense because for polyhedra $ Q $ with a small number of facets it may produce relaxations $ f(Q) $ not admitting any extended formulation of small size.
(This is to be expected since determining membership in $ f(Q) $ is $\mathsf{NP}$-complete~\cite{Eisenbrand99}.)
A striking example is given by choosing $ Q $ as the fractional matching polytope \cite[Section 30.2]{SchrijverBookA03}. In this case, the Chv\'atal-Gomory closure $ f(Q) $ is the matching polytope. It was recently shown by Rothvo\ss{} that all extended formulations of the matching polytope have exponential size~\cite{Rothvoss17JACM}.

On the other hand, various methods have been designed for obtaining strengthened relaxations for \emph{specific} sets $ S $.
Such methods include, as an example, an impressive collection of families of valid inequalities of the traveling salesperson polytope that strengthen the classical subtour elimination formulation. 
Similar research has been performed for many other polytopes arising in combinatorial optimization, such as stable set polytopes and knapsack polytopes.

In this work, we propose a new method that interpolates between the two approaches described above.
We design a procedure to strengthen any convex set $ Q \subseteq \R^n $ containing a set $ S \subseteq \{0,1\}^n $ by exploiting certain additional information about $ S $.
Namely, the required extra information will be in the form of a Boolean formula $ \phi $ defining the target set $ S $.
The improved relaxation is obtained by ``feeding'' $ Q $ into the formula $ \phi $, and will be denoted by $ \phi(Q) $.
While the formula $ \phi $ has to be provided as a further input, we present a simple and efficient procedure to obtain $ \phi(Q) $ that is easy to implement.

To exemplify our approach, let us suppose that the set $ S $ arises from a $ 0/1 $-covering problem, i.e., it is given by a matrix $ A \in \{0,1\}^{m \times n} $ such that $ S = \{ x \in \{0,1\}^n : Ax \geqslant e \} $, where $ e $ is an all-ones vector.
Equivalently, $ S $ can be specified by the Boolean formula in conjunctive normal form
\begin{equation} \label{eq:canonical_formula}
    \phi := \bigwedge_{i = 1}^m \bigvee_{j : A_{ij} = 1} x_j,
\end{equation}
where the entries $ x_j $ of a vector $ x \in \{0,1\}^n $ are treated as the logical values `true' (if $ x_j = 1 $) or `false' (if $ x_j = 0 $).
Starting with any convex relaxation $ Q \subseteq [0,1]^n $ that contains $ S $, our procedure will yield a stronger convex relaxation $ \phi(Q) $ sandwiched between $ S $ and $ Q $. 
As we will later discuss in detail, our results simplify, improve, and extend results by Bienstock \& Zuckerberg~\cite{BZ2004} who developed a powerful hierarchy tailored to $ 0/1 $-covering problems.

The aim of this work is to analyze various aspects regarding the strength of our procedure.
An important property of $ \phi(Q) $ is that it can be described by an extended formulation whose size is bounded by the size of the formula $ \phi $ (which we formally define later) times the size of an extended formulation defining the input relaxation $ Q $.
Since many sets $ S \subseteq \{0,1\}^n $ that arise in combinatorial optimization can be defined by formulas whose size is polynomial in $ n $, for such sets the complexity of $ \phi(Q) $ can be polynomially bounded in terms of the complexity of $ Q $.

Another property of our procedure is that it is \emph{complete} in the sense that iterating it a finite number of times (in fact, at most $ n $ times) always yields the convex hull of $ S $.
Furthermore, our procedure can be applied to \emph{any} convex set $ Q \subseteq [0,1]^n $ that contains the target set $ S $.
In particular, the set $ Q $ is even allowed to contain $0/1$-points that do not belong to $ S $.
As an example, we can always apply our method with $ Q = [0,1]^n $.
Intuitively, this is possible since the information of which points belong to $ S $ is stored in $ \phi $.

Our paper is organized as follows.
We start by describing our procedure to obtain $ \phi(Q) $ in Section~\ref{secDescription}.
In Section~\ref{secPitchNotch}, we introduce notions that allow us to quantify the strength of our relaxations.
Our main results regarding properties of the set $ \phi(Q) $ are presented in Section~\ref{secMainResults}.
In Section~\ref{secApplications}, we discuss several applications of our method in detail.

For readers familiar with circuit complexity, we mention that our work is inspired by a relatively unknown connection between \emph{Karchmer-Wigderson games}~\cite{KW90} and nonnegative factorizations, pioneered by Hrube\v s~\cite{Hrubes12}.
This connection was recently rediscovered by G{\"o\"o}s, Jain and Watson~\cite{GJW2016} and exploited in~\cite{BFHS2017}.

\section{Description of the procedure}
\label{secDescription}
In order to present the construction of the procedure, let us fix some notation concerning Boolean formulas.
We consider formulas that are built out of input variables $ x_1,\dotsc,x_n $, conjunctions $ \AND $, disjunctions $ \OR $, and negations $ \NOT $ in the standard way.
Here, we define the \emph{size}\footnote{Some sources also count the number of occurrences of $ \AND $, $ \OR $ and $ \NOT $, which is not necessary for our purposes.} of a Boolean formula as the total number of occurrences of input variables. We denote by $|\phi|$ the size of $\phi$.

Given a Boolean formula $ \phi $, we can interpret it as a function from $ \{0,1\}^n \to \{0,1\} $ and for an input $ x = (x_1,\dotsc,x_n) \in \{0,1\}^n $ we will denote its output by $ \phi(x) $.
We say that the set $ S = \{ x \in \{0,1\}^n : \phi(x) = 1 \} $ is \emph{defined} by $ \phi $.
Two formulas are said to be equivalent if they define the same set.

We say that a formula is \emph{reduced} if negations are only applied to input variables.
Note that, by De Morgan's laws, every Boolean formula can be brought into an equivalent reduced formula of the same size.
As an example, the formulas
\begin{align}
    \nonumber
    \phi_1 & = \NOT \left( \left( x_1 \AND \NOT x_2 \right) \OR \left( \NOT \left( x_1 \OR x_3 \right) \right) \right) \\
    \label{eqExampleReducedFormula}
    \phi_2 & = \left( \NOT x_1 \OR x_2 \right) \AND \left( x_1 \OR x_3 \right)
\end{align}
are equivalent and both have size $ 4 $, but only the second is in reduced form.

Below, we will repeatedly use the elementary fact that for every reduced formula $\phi$ of size $|\phi|$, one of the following holds:
\begin{itemize}
    \item $|\phi| = 1$ and either $\phi = x_i$ or $\phi = \NOT x_i$ for some $i \in [n]$, or
    \item $|\phi| \geqslant 2$ and $\phi$ is either the conjuction or the disjunction of two reduced formulas $\phi_1,\phi_2$ such that $ |\phi| = |\phi_1| + |\phi_2| $.
\end{itemize}
This gives a way to represent any reduced Boolean formula as a rooted tree each of whose inner nodes is labeled with $\AND$ or $\OR$ and each of whose leaves is labeled with a non-negated variable $x_i$ or a negated variable $\NOT x_i$.  Note that there may be many trees that represent the same reduced Boolean formula, but this will not matter. Observe that the size of a formula is the number of leaves in any one of its trees.

We are ready to describe our method to strengthen a convex relaxation of a given set of points in $ \{0,1\}^n $.
\begin{defn}
    Let $ \phi $ be a reduced Boolean formula with input variables $ x_1,\dotsc,x_n $ and let $ Q \subseteq [0,1]^n $ be any convex set.
    The set $ \phi(Q) \subseteq \R^n $ is recursively constructed from the formula $ \phi $ as follows.
    \begin{itemize}
        \item Replace any non-negated input variable $ x_i $ by the set $ \{ x \in Q : x_i = 1 \} $.
        \item Replace any negated input variable $ \NOT x_i $ by the set $ \{ x \in Q : x_i = 0 \} $.
        \item Replace any conjuction $ \AND $ of two sets by their intersection.
        \item Replace any disjunction $ \OR $ of two sets by the convex hull of their union.
    \end{itemize}
\end{defn}
As an example, given any convex set $ Q \subseteq [0,1]^3 $ and the formula $ \phi_2 $ defined
in~\eqref{eqExampleReducedFormula}, we have
\begin{align*}
    \phi_2(Q) = & \conv \big( \{ x \in Q : x_1 = 0 \} \cup \{ x \in Q : x_2 = 1 \} \big) \\
        & \cap \conv \big( \{ x \in Q : x_1 = 1 \} \cup \{ x \in Q : x_3 = 1 \} \big).
\end{align*}
In the remainder of this work, we will analyze several properties of $ \phi(Q) $.
One simple observation will be that, if $ S $ is defined by $ \phi $ and $ S \subseteq Q $, then $ S \subseteq \phi(Q) \subseteq Q $.
Furthermore, $ \phi(Q) $ is strictly contained in $ Q $ unless $ \conv(S) = Q $.
In order to quantify this improvement over $ Q $, we will introduce useful measures in the next section.

\section{Measuring the strength: Pitch and notch}
\label{secPitchNotch}
We now introduce two quantities that measure the strength of our procedure.
To this end, note that for every linear inequality in variables $ x_1, \dotsc, x_n $ we can partition $ [n] $ into sets $ I^+, I^- \subseteq [n] $ (with $ I^+ \cup I^- = [n] $ and $ I^+ \cap I^- = \emptyset $) such that the inequality can be written as
\begin{equation}
    \label{eqInequalityNormalForm}
    \sum_{i \in I^+} c_i x_i + \sum_{i \in I^-} c_i (1 - x_i) \geqslant \delta,
\end{equation}
where $ c = (c_1,\dotsc,c_n)^\intercal \in \R^n_{\ge 0} $ and $ \delta \in \R $.
Since we will only consider the intersection of $ [0,1]^n $ with the set of points satisfying such an inequality, we are only interested in inequalities where $ \delta \geqslant 0 $.
In this case, we call~\eqref{eqInequalityNormalForm} an inequality in \emph{standard form}.

The \emph{notch} of an inequality in standard form is the smallest number $ \notch $ such that
\begin{equation}
    \label{eqSubsetOfCoefficientsLargeEnough}
    \sum_{j \in J} c_j \geqslant \delta
\end{equation}
holds for every $ J \subseteq [n] $ with $ |J| \geqslant \notch $, while its \emph{pitch} is the smallest number $ \pitch $ such that~\eqref{eqSubsetOfCoefficientsLargeEnough} holds for every $ J \subseteq \supp(c) $ with $ |J| \geqslant \pitch $.
Note that the pitch of an inequality is at most its notch.
For instance, the notch of the inequality $x_1 + x_n \geqslant 1$ is~$n-1$, while its pitch equals~$1$.
Both quantities appear in the study of Chv\'atal-Gomory closures of polytopes in $ [0,1]^n $.

Intuitively, the notch of an inequality is related to how ``deep'' it cuts the $0/1$-cube. For simplicity, assume that $I^- = \emptyset$, so that the origin minimizes the left-hand side of \eqref{eqInequalityNormalForm} over the cube. The notch of \eqref{eqInequalityNormalForm} is then the smallest number $ \notch $ such that no $0/1$-vector of Hamming weight $ \notch $ or more is cut by the inequality. A similar intuition applies to the pitch.

We extend the definition of notch from inequalities to sets of $0/1$-points as follows. The \emph{notch} of a non-empty set $ S \subseteq \{0,1\}^n $, denoted $\notch(S)$, is the largest notch of any inequality in standard form that is valid for $ S $. It can be shown that $ \notch(S) $ is equal to the smallest number $ k $ such that every $ k $-dimensional face of $ [0,1]^n $ contains a point from $ S $.
This equivalent definition of notch\footnote{To avoid possible confusion, we warn the reader that in a previous version of~\cite{BFHW2016}, this notion is called pitch instead of notch.} was introduced in~\cite{BFHW2016}. The main result of~\cite{BFHW2016} is that if $S$ has bounded notch and $\conv(S)$ has bounded facet coefficients, then every polytope $Q \subseteq [0,1]^n$ whose set of $0/1$-points is $S$ has bounded Chv\'atal-Gomory rank. 

The term pitch was used by Bienstock \& Zuckerberg~\cite{BZ2004}, who defined it for \emph{monotone} inequalities in standard form, that is, where $ I^- = \emptyset $.  Bounded pitch inequalities are related to the Chv\'atal-Gomory closure as follows. Consider any constants $\varepsilon > 0$ and $\ell \in \Z_{\geqslant 1}$, and any relaxation $Q := \{x \in [0,1]^n : Ax \geqslant b\}$ of a set $S := Q \cap \{0,1\}^n$, with $A$, $b$ nonnegative. Bienstock \& Zuckerberg~\cite[Lemma 2.1]{BZ2006} proved that adding all valid pitch-$\pitch$ inequalities for $\pitch \leqslant \lceil \ell / \ln(1+\varepsilon) \rceil = \Theta(\ell / \varepsilon)$ to the system defining $Q$ gives a relaxation $R$ that is a $(1+\varepsilon)$-approximation\footnote{Here, this means that $\min \{c^\intercal x : x \in f^\ell(Q)\} \leqslant (1+\varepsilon) \min \{c^\intercal x : x \in R\}$ for every nonnegative cost vector $c$, where $f^\ell(Q)$ denotes the $\ell$-th Chv\'atal-Gomory closure of $Q$.} of the $\ell$-th Chv\'atal-Gomory closure of $Q$.

\section{Main results}
\label{secMainResults}
In this section, we prove several properties of the set $ \phi(Q) $.
Let us start with the following simple observation.
\begin{prop}
    \label{propRelax}
    For every reduced Boolean formula $\phi$ and every convex set $Q \subseteq [0,1]^n$, the set $\phi(Q)$ is a convex subset of $Q$. Moreover, $\phi(Q)$ contains every point $x \in \{0,1\}^n$ such that $x \in Q$ and $\phi(x) = 1$. In other words, $\phi(Q)$ contains $Q \cap \phi^{-1}(1)$.
\end{prop}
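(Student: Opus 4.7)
The plan is to proceed by structural induction on the reduced Boolean formula $\phi$, exploiting the recursive description of $\phi(Q)$ given in the definition and the elementary fact that every reduced formula is either a (possibly negated) variable or the conjunction/disjunction of two strictly smaller reduced subformulas.

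For the base case, I would take $|\phi|=1$, so $\phi = x_i$ or $\phi = \NOT x_i$. Then $\phi(Q)$ equals either $\{x \in Q : x_i = 1\}$ or $\{x \in Q : x_i = 0\}$, both of which are intersections of the convex set $Q$ with an affine hyperplane, hence convex subsets of $Q$. For the second claim, any $x \in \{0,1\}^n \cap Q$ with $\phi(x)=1$ satisfies the defining linear equation, so $x \in \phi(Q)$.

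For the inductive step, assume both claims hold for reduced formulas of size strictly less than $|\phi| \geqslant 2$, and write $\phi = \phi_1 \AND \phi_2$ or $\phi = \phi_1 \OR \phi_2$ with $|\phi_1|,|\phi_2|<|\phi|$. In the conjunction case $\phi(Q) = \phi_1(Q) \cap \phi_2(Q)$; by induction each $\phi_i(Q)$ is a convex subset of $Q$, and the intersection of two convex sets is convex and remains inside $Q$. Moreover, if $x \in \{0,1\}^n \cap Q$ satisfies $\phi(x)=1$ then $\phi_1(x)=\phi_2(x)=1$, so by induction $x$ lies in both $\phi_i(Q)$, hence in $\phi(Q)$. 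In the disjunction case $\phi(Q) = \conv(\phi_1(Q) \cup \phi_2(Q))$, which is convex by definition; since $\phi_1(Q) \cup \phi_2(Q) \subseteq Q$ by induction and $Q$ is itself convex, the convex hull is still contained in $Q$. If $\phi(x)=1$ for $x \in \{0,1\}^n \cap Q$, then $\phi_i(x)=1$ for some $i \in \{1,2\}$, so by induction $x \in \phi_i(Q) \subseteq \phi(Q)$.

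There is essentially no hard step: the only place one must be slightly careful is the disjunction case, where containment $\phi(Q) \subseteq Q$ relies on using the convexity of $Q$ itself (so that $\conv$ of a subset of $Q$ stays inside $Q$); without that observation one might only conclude a containment in $\conv(Q) = Q$, which happens to coincide. Everything else is a direct unpacking of the recursive definition matched against the semantics of $\AND$ and $\OR$.
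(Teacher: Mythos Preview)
Your proof is correct and follows essentially the same approach as the paper: structural induction on $|\phi|$, treating the base cases $\phi = x_i$ and $\phi = \NOT x_i$ as faces of $Q$, and handling the conjunction and disjunction cases exactly as you do. The only cosmetic difference is that the paper dispatches the ``$\phi(Q)$ is a convex subset of $Q$'' claim in one sentence (noting that $\phi(Q)$ is built from faces of $Q$ via intersections and convex hulls of unions) rather than folding it into the induction, but your more explicit treatment is perfectly fine.
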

\begin{proof}
    The fact that $\phi(Q)$ is a convex set contained in $Q$ is clear, since $\phi(Q)$ is constructed from faces of $Q$ by taking intersections and convex hulls of unions. 
    
    We prove the second part by induction on the size of $\phi$. If $|\phi| = 1$, then $\phi$ is either $\phi = x_i$ or $\phi = \NOT x_i$ for some $i \in [n]$. So either $\phi(Q) = \{x \in Q : x_i = 1\}$ or $\phi(Q) = \{x \in Q : x_i = 0\}$, respectively. We see immediately that $\phi(Q)$ contains $Q \cap \phi^{-1}(1)$. 
    
    Now if $|\phi| \geqslant 2$, then $\phi$ is the conjunction or disjunction of two formulas of smaller size, say $\phi_1$ and $\phi_2$. In the first case, $\phi = \phi_1 \AND \phi_2$ and we have
    $\phi(Q) = \phi_1(Q) \cap \phi_2(Q) \supseteq (Q \cap \phi^{-1}_1(1)) \cap (Q \cap \phi^{-1}_2(1)) = Q \cap (\phi^{-1}_1(1) \cap  \phi^{-1}_2(1)) = Q \cap \phi^{-1}(1)$, where the inclusion follows from induction. In the second case, $\phi = \phi_1 \OR \phi_2$ and $\phi(Q) = \conv(\phi_1(Q) \cup \phi_2(Q)) \supseteq (Q \cap \phi^{-1}_1(1)) \cup (Q \cap \phi^{-1}_2(1)) = Q \cap (\phi^{-1}_1(1) \cup \phi^{-1}_2(1)) = Q \cap \phi^{-1}(1)$.
\end{proof}
Next, we argue that we can use $\phi$ to transform any extended formulation for $ Q $ into one for $ \phi(Q) $.
Recall that an \emph{extended formulation of size $ m $} of a polytope $ P $ is determined by matrices $ T \in \R^{n \times d} $, $ A \in \R^{m \times d} $ and vectors $ t \in \R^n $, $ b \in \R^m $ such that $ P = \{ x \in \R^n : \exists y \in \R^d : Ay \geqslant b, \, x = Ty + t\} $.
The \emph{extension complexity} of $ P $ is defined as the smallest size of any extended formulation for $ P $, and is denoted by $ \xc(P) $.
In what follows, we need the following standard facts about extension complexity.
First, if $ F $ is a non-empty face of $ P $, then $ \xc(F) \le \xc(P) $.
Second, for any non-empty polytopes $ P_1, P_2 \subseteq \R^n $ one has $ \xc(P_1 \cap P_2) \le \xc(P_1) + \xc(P_2) $.
Third, a slight refinement of Balas' theorem~\cite{Balas79} states that $ \xc(\conv(P_1 \cup P_2)) \le \max \{\xc(P_1), 1\} + \max \{\xc(P_2), 1\} $, see~\cite[Prop.~3.1.1]{W2016}.
\begin{prop} \label{prop:xc_upper_bound}
    Let $ \phi $ be a reduced Boolean formula and let $ Q \subseteq [0,1]^n $ be a polytope such that $ \phi(Q) \ne \emptyset $.
    Then $ \phi(Q) $ is a polytope with extension complexity $ \xc(\phi(Q)) \le |\phi| \xc(Q) $.
\end{prop}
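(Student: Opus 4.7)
The plan is to induct on $|\phi|$, combining directly the three extension-complexity facts recalled just before the statement with the recursive definition of $\phi(Q)$. Before starting the induction, we dispose of the degenerate case $\xc(Q) = 0$: then $Q$ is a single point, so $\phi(Q)$ is either empty or equal to $Q$, and the claimed bound holds trivially (since $\xc(\phi(Q)) = 0 = |\phi|\cdot \xc(Q)$). Henceforth we may assume $\xc(Q) \geqslant 1$.

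For the base case $|\phi| = 1$, the formula is $\phi = x_i$ or $\phi = \NOT x_i$ for some $i$, so $\phi(Q)$ is a non-empty face of $Q$. The first standard fact gives $\xc(\phi(Q)) \leqslant \xc(Q) = |\phi|\xc(Q)$.

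For the inductive step $|\phi| \geqslant 2$, write $\phi = \phi_1 \star \phi_2$ with $\star \in \{\AND, \OR\}$ and $|\phi| = |\phi_1| + |\phi_2|$. If $\star = \AND$, then $\phi(Q) = \phi_1(Q) \cap \phi_2(Q)$ is non-empty, so both $\phi_i(Q)$ are non-empty; the subadditivity of $\xc$ under intersection and the induction hypothesis give $\xc(\phi(Q)) \leqslant \xc(\phi_1(Q)) + \xc(\phi_2(Q)) \leqslant (|\phi_1|+|\phi_2|)\xc(Q) = |\phi|\xc(Q)$. If $\star = \OR$ and one of $\phi_1(Q), \phi_2(Q)$ is empty, then $\phi(Q)$ equals the other, so the induction hypothesis applied to that smaller formula suffices, as $|\phi_{3-i}| \leqslant |\phi|$. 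If both $\phi_1(Q)$ and $\phi_2(Q)$ are non-empty, then the refined Balas bound yields $\xc(\phi(Q)) \leqslant \max\{\xc(\phi_1(Q)),1\} + \max\{\xc(\phi_2(Q)),1\}$. Because $|\phi_i| \geqslant 1$ and $\xc(Q) \geqslant 1$, the quantity $|\phi_i|\xc(Q)$ is at least $1$, and by the induction hypothesis it is also at least $\xc(\phi_i(Q))$; therefore each summand is bounded by $|\phi_i|\xc(Q)$, and summing gives $\xc(\phi(Q)) \leqslant |\phi|\xc(Q)$.

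The only real obstacle is the bookkeeping around possibly empty intermediate polytopes, which is needed both to legitimately invoke the induction hypothesis and to prevent the additive ``$+1$'' in the refined Balas bound from breaking the multiplicative estimate. Once the degenerate case $\xc(Q) = 0$ is handled at the outset and the emptiness of the $\phi_i(Q)$ is treated case by case in the $\OR$ step, the remainder of the argument is a direct structural recursion mirroring the recursive definition of $\phi(Q)$.
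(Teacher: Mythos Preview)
Your proof is correct and follows essentially the same approach as the paper's own proof: both first dispose of the degenerate case $\xc(Q)=0$, then induct on $|\phi|$, using the face bound for the base case, subadditivity under intersection for $\AND$, and the refined Balas bound for $\OR$ with a case distinction on whether one of the $\phi_i(Q)$ is empty. The only cosmetic difference is that in the $\OR$ case with one empty summand you explicitly invoke the induction hypothesis on the surviving subformula, whereas the paper simply notes that the bound holds in that case.
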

\begin{proof}
    First, note that if $ \xc(Q) = 0 $, then $ Q $ is a single point and so is $ \phi(Q) $, which implies $ \xc(\phi(Q)) = 0 $ and hence the claimed inequality holds trivially.
    Thus, we may assume that $ \xc(Q) \geqslant 1 $ holds.

    We prove the claim by induction over the size of $ \phi $.
    If $ |\phi| = 1 $, then $ \phi = x_i$ or $\phi = \NOT x_i$ for some $i \in [n]$.
    So either $\phi(Q) = \{x \in Q : x_i = 1\}$ or $\phi(Q) = \{x \in Q : x_i = 0\}$, respectively.
    In both cases, $ \phi(Q) $ is a face of $ Q $ and hence $ \xc(\phi(Q)) \leqslant \xc(Q) $.
    
    If $ |\phi| \geqslant 2 $, there exist reduced Boolean formulas $ \phi_1,\phi_2 $ (of size smaller than $ |\phi| $) with $ |\phi| = |\phi_1| + |\phi_2| $ such that $ \phi = \phi_1 \AND \phi_2 $ or $ \phi = \phi_1 \OR \phi_2 $.
    First, consider the case $ \phi = \phi_1 \AND \phi_2 $, in which we have $ \phi(Q) = \phi_1(Q) \cap \phi_2(Q) $.
    Since $ \phi(Q) $ is non-empty, the same holds for $ \phi_1(Q) $ and $ \phi_2(Q) $ and hence, by the induction hypothesis, we have $ \xc(\phi_i(Q)) \leqslant |\phi_i| \xc(Q) $ for $ i = 1,2 $.
    Therefore,
    \[
        \xc(\phi(Q)) \leqslant \xc(\phi_1(Q)) + \xc(\phi_2(Q))
        \leqslant |\phi_1| \xc(Q) + |\phi_2| \xc(Q) = |\phi| \xc(Q).
    \]
    It remains to consider the case $ \phi = \phi_1 \OR \phi_2 $, in which we have $ \phi(Q) = \conv(\phi_1(Q) \cup \phi_2(Q)) $.
    Note that the claimed inequality holds if $ \phi_1(Q) = \emptyset $ or $ \phi_2(Q) = \emptyset $.
    Thus, we may assume that $ \phi_1(Q) $ and $ \phi_2(Q) $ are both non-empty. By the induction hypothesis, $ \xc(\phi_i(Q)) \leqslant |\phi_i| \xc(Q) $ for $ i = 1,2 $.
    Therefore,
    \begin{align*}
        \xc(\phi(Q)) & \leqslant \max \{\xc(\phi_1(Q)), 1\} + \max \{\xc(\phi_2(Q)), 1\} \\
        & \leqslant \max \{|\phi_1| \xc(Q), 1\} + \max \{|\phi_2| \xc(Q), 1\} \\
        & = |\phi_1| \xc(Q) + |\phi_2| \xc(Q) \\
        & = |\phi| \xc(Q). \qedhere
    \end{align*}
\end{proof}

We remark that the upper bound provided by Proposition~\ref{prop:xc_upper_bound} is quite generous, and can be improved in some cases. For instance, if we let $\tau$ denote the number of maximal rooted subtrees of $\phi$ whose nodes are either input variables or $\AND$ gates, then we have $\xc(\phi(Q)) \leqslant \tau \xc(Q)$. This is due to the well-known fact that any intersection of faces of $Q$ is a face of $Q$.

A Boolean formula is \emph{monotone} if it does not contain negations. We are ready to prove our main theorem in the monotone case.

\begin{thm} \label{thmMainPitch}
    Let $ \phi $ be a monotone Boolean formula defining a set $ S \subseteq \{0,1\}^n $ and let $ Q \subseteq [0,1]^n $ be any convex set containing $ S $.
    If $ Q $ satisfies all monotone inequalities of pitch at most $ \pitch $ that are valid for $ S $, then $ \phi(Q) $ satisfies all monotone inequalities of pitch at most $ \pitch + 1 $ that are valid for $ S $.
    Moreover, if $ Q $ is a polytope defined by an extended formulation of size $ \sigma $, then $ \phi(Q) $ is a polytope that can be defined by an extended formulation of size $ |\phi| \sigma$, where $ |\phi| $ is the size of the formula.
\end{thm}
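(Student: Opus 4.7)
The extension-complexity bound is immediate from Proposition~\ref{prop:xc_upper_bound}, so all the work is in the pitch claim. Let $L \colon \sum_i c_i x_i \geqslant \delta$ be a monotone pitch-$(\pitch+1)$ inequality valid for $S$; one may assume $\delta > 0$ since the case $\delta = 0$ is trivial. My plan is a structural induction on $\phi$ that tracks, at each node $v$ of a tree representing $\phi$, the quantity $m_v := \inf\{L(x) : x \in \phi_v(Q)\}$, where $\phi_v(Q)$ denotes the set produced by the procedure at the subtree rooted at $v$. The construction and the linearity of $L$ give clean propagation rules: at an $\AND$-gate $v$ with children $v_1,v_2$ one has $\phi_v(Q) = \phi_{v_1}(Q) \cap \phi_{v_2}(Q)$, so $m_v \geqslant \max(m_{v_1}, m_{v_2})$; at an $\OR$-gate, $\phi_v(Q) = \conv(\phi_{v_1}(Q) \cup \phi_{v_2}(Q))$, and linearity of $L$ forces $m_v = \min(m_{v_1}, m_{v_2})$. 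The target is $m_{\mathrm{root}} \geqslant \delta$.

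At a leaf $v$ labelled $x_{i_v}$ (no negations, since $\phi$ is monotone), $\phi_v(Q) = \{x \in Q : x_{i_v} = 1\}$, and I show $m_v \geqslant \delta$ whenever $c_{i_v} > 0$. If $c_{i_v} \geqslant \delta$, then on $\phi_v(Q)$ one has $L(x) \geqslant c_{i_v} x_{i_v} = c_{i_v} \geqslant \delta$ directly. If $0 < c_{i_v} < \delta$, consider the derived inequality $L' \colon \sum_{j \ne i_v} c_j x_j \geqslant \delta - c_{i_v}$; this $L'$ is monotone and valid for $S$ (split on whether $x^*_{i_v}$ equals $0$ or $1$ for $x^* \in S$), and its pitch is at most $\pitch$ because adjoining $i_v$ to any $\pitch$-subset of $\supp(c) \setminus \{i_v\}$ yields a $(\pitch+1)$-subset of $\supp(c)$ whose coefficient sum is at least $\delta$. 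The hypothesis on $Q$ gives $Q \models L'$, so $\phi_v(Q) \subseteq Q$ satisfies $L'$, and combining with $x_{i_v} = 1$ yields $L(x) \geqslant c_{i_v} + (\delta - c_{i_v}) = \delta$. Leaves with $c_{i_v} = 0$ may however have $m_v$ arbitrarily small, and absorbing them is the delicate point.

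The key observation is that the propagation rules make ``$m_v \geqslant \delta$'' behave as a Boolean $\OR$ at $\AND$-nodes and a Boolean $\AND$ at $\OR$-nodes. Calling a leaf ``good'' when $i_v \in \supp(c)$, the truth of ``$m_{\mathrm{root}} \geqslant \delta$'' is precisely the evaluation of the dual formula $\phi^*$ (obtained by swapping $\AND$ and $\OR$ in $\phi$, leaving leaves fixed) on the $0/1$-vector $y$ defined by $y_i = 1$ iff $c_i \ne 0$. For monotone $\phi$, the identity $\phi^*(y) = \neg \phi(\neg y)$ gives $\phi^*(y) = 1$ iff $\supp(y)$ intersects every minterm of $\phi$, and validity of $L$ on $S$ supplies exactly this: for any minterm $T$, $\sum_{i \in T} c_i = L(e_T) \geqslant \delta > 0$, hence $T \cap \supp(c) \ne \emptyset$. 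I would formalise this as an induction on $|\phi|$ carrying the strengthened hypothesis ``$\phi^*(y) = 1$, and $Q$ satisfies every derived inequality $L'$ as above''. The $\OR$-case then passes both subformula conditions to the children (using $\phi^* = \phi_1^* \AND \phi_2^*$), and the $\AND$-case uses that at least one $\phi_i^*(y) = 1$ (using $\phi^* = \phi_1^* \OR \phi_2^*$), so the corresponding $\phi_i(Q)$ satisfies $L$ by induction and $\phi(Q) \subseteq \phi_i(Q)$ inherits this. The dual-formula step is the main obstacle; the rest is linear propagation and the leaf case analysis.
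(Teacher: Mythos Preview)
Your argument is correct; the leaf analysis (the derived pitch-$\pitch$ inequality $L'$) is identical to the paper's, and the dual-formula step is sound since $\phi^*(y)=\neg\phi(\neg y)=1$ follows directly from the fact that $\neg y$, the characteristic vector of $[n]\setminus\supp(c)$, violates $L$ and hence lies outside $S$. One small overstatement: ``$m_{\mathrm{root}}\geqslant\delta$ is \emph{precisely} $\phi^*(y)$'' holds only as the implication you need, since at $\AND$-nodes $m_v$ can strictly exceed $\max(m_{v_1},m_{v_2})$; your induction uses only that direction, so no harm is done. The paper runs essentially the same argument top-down and by contradiction: it sets $a:=\neg y$, notes $\phi(a)=0$, assumes a violating $\tilde x\in\phi(Q)$, and descends a root-to-leaf path maintaining the two invariants ``$\psi(a)=0$'' and ``$\psi(Q)$ contains a violator'' (at $\AND$-nodes the violator lies in both children and one child has $\psi_k(a)=0$; at $\OR$-nodes both children have $\psi_k(a)=0$ and a convex decomposition of $\tilde x$ sends the violation to one of them), reaching a leaf $x_j$ with $j\in\supp(c)$ and finishing with the same derived inequality. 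Your bottom-up induction with $\phi^*$ and the paper's top-down path are De~Morgan duals of one another---this is the Karchmer--Wigderson viewpoint the paper alludes to in the introduction. Your packaging makes the duality explicit and avoids the contradiction framing; the paper's version avoids introducing an auxiliary formula and is a little more concrete.
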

\begin{proof}
The second part of the theorem is implied by Proposition~\ref{prop:xc_upper_bound}. For the first part, consider any monotone pitch-$(\pitch+1)$ inequality in standard form that is valid for $S = \{x \in \{0,1\}^n : \phi(x) = 1\}$,
\begin{equation}
    \label{eqMonotoneInequality}
    \sum_{i \in I^+} c_i x_i \geqslant \delta.
\end{equation}
By the definition of pitch, we may assume $c_i > 0$ for all $i \in I^+$.  We also assume $\delta>0$; otherwise, there is nothing to prove.  Let $a \in \{0,1\}^n$ be the characteristic vector of $[n] \setminus I^+$. Thus, $a_i = 1$ if $i \in [n] \setminus I^+$ and $a_i = 0$ if $i \in I^+$. Notice that $a$ violates \eqref{eqMonotoneInequality}. This implies $\phi(a) = 0$.

By contradiction, suppose that \eqref{eqMonotoneInequality} is not valid for $\phi(Q)$. That is, there exists a point in $\phi(Q)$ that violates \eqref{eqMonotoneInequality}.
Let $T$ be a tree that represents the formula $\phi$.  Each $v \in V(T)$ has a corresponding formula, which is the formula computed by the subtree of $T$ rooted at $v$.  For notational convenience, we identity each node of $T$ with its corresponding formula.

Our strategy is to find a root-to-leaf path in $T$ such that for every node $\psi$ on this path, 
\begin{center}
($\star$) \quad $\psi(a) = 0$ \quad and \quad there exists a point $\tilde{x} = \tilde{x}(\psi) \in \psi(Q)$ that violates \eqref{eqMonotoneInequality}.
\end{center}
This is satisfied at the root node $\phi$. 

Now consider any non-leaf node $\psi$ in $T$ that satisfies ($\star$). Let $\psi_1$ and $\psi_2$ denote the children of $\psi$, so that $\psi = \psi_1 \AND \psi_2$ or $\psi = \psi_1 \OR \psi_2$. We claim that, in both cases, there exists an index $k \in \{1,2\}$ such that $\psi_k$ satisfies ($\star$). 

First, in case $\psi = \psi_1 \AND \psi_2$, we let $\tilde{x}(\psi_1) = \tilde{x}(\psi_2) := \tilde{x}(\psi)$ and choose $k \in \{1,2\}$ such that $\psi_k(a) = 0$. Such an index is guaranteed to exist since $\psi(a) = 0$. Then $\psi_k$ satisfies ($\star$).

Second, in case $\psi = \psi_1 \OR \psi_2$, we have $\psi_1(a) = \psi_2(a) = \psi(a) = 0$. We let $\tilde{x}(\psi_1)$ and $\tilde{x}(\psi_2)$ be any points of $\psi_1(Q)$ and $\psi_2(Q)$ (respectively) such that the segment $[\tilde{x}(\psi_1),\tilde{x}(\psi_2)]$ contains $\tilde{x}$. For at least one $k \in \{1,2\}$, the point $\tilde{x}(\psi_k)$ violates \eqref{eqMonotoneInequality}. Thus $\psi_k$ satisfies ($\star$) for that choice of $k$.

By iterating the argument above, starting at the root node $\phi$, we reach a leaf node $\psi$ that satisfies ($\star$). Note that  $\psi = x_j$ for some $j$, since $\phi$ is monotone. We have $a_j = \psi(a) = 0$, so $j \in I^+$. Moreover, there exists a point $\tilde{x} = \tilde{x}(\psi) \in \psi(Q) = \{x \in Q : x_j = 1\}$ that violates \eqref{eqMonotoneInequality}.

Now consider the monotone inequality
\begin{equation}
    \label{eqInequalitySmallerPitch}
    \sum_{\substack{i \in I^+ \\ i \neq j}} c_i x_i \geqslant \delta - c_j\,.
\end{equation}
This inequality is valid for $S$ since it is the sum of \eqref{eqMonotoneInequality} and $c_j (1-x_j) \geqslant 0$, which are both valid.  Since $c_j (1-\tilde{x}_j) = 0$, \eqref{eqInequalitySmallerPitch} is also violated by $\tilde{x} \in \psi(Q) \subseteq Q$.  The key observation is that the pitch of \eqref{eqInequalitySmallerPitch} is at most $\pitch$, which contradicts our assumption that $Q$ satisfies all monotone inequalities of pitch at most $\pitch$.
\end{proof}

In the non-monotone case, we now prove a statement analogous to Theorem~\ref{thmMainPitch} where the pitch is replaced by the notch.

\begin{thm}
    \label{thmMainNotch}
    Let $ \phi $ be a reduced Boolean formula defining a set $ S \subseteq \{0,1\}^n $ and let $ Q \subseteq [0,1]^n $ be any convex set containing $ S $.
    If $ Q $ satisfies all inequalities of notch at most $ \notch $ that are valid for $ S $, then $ \phi(Q) $ satisfies all inequalities of notch at most $ \notch + 1$ that are valid for $ S $.
    Moreover, if $ Q $ is a polytope defined by an extended formulation of size $ \sigma $, then $ \phi(Q) $ is a polytope that can be defined by an extended formulation of size $ |\phi| \sigma $.
\end{thm}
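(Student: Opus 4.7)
The plan is to mirror the proof of Theorem~\ref{thmMainPitch}, replacing the pitch-reduction step at the leaf by a more delicate lifting argument. Assume for contradiction that an inequality in standard form
\[
L:\quad \sum_{i \in I^+} c_i x_i + \sum_{i \in I^-} c_i(1-x_i) \geqslant \delta
\]
of notch at most $\notch+1$ is valid for $S$ but is violated by some $\tilde{x} \in \phi(Q)$; we may assume $\delta > 0$. Let $\bar{a} \in \{0,1\}^n$ be the vector with $\bar{a}_i = 0$ for $i \in I^+$ and $\bar{a}_i = 1$ for $i \in I^-$; since $\bar{a}$ minimizes the left-hand side to $0$, we have $\bar{a} \notin S$ and $\phi(\bar{a}) = 0$. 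Walking down a tree representing $\phi$ exactly as in the proof of Theorem~\ref{thmMainPitch}---maintaining the invariant $(\star)$ that $\psi(\bar{a}) = 0$ and that some $\tilde{x}(\psi) \in \psi(Q)$ violates $L$---reaches a leaf $\psi$. Since $\phi$ is reduced but not necessarily monotone, two cases arise: either $\psi = x_j$ (Case~A), where $\bar{a}_j = 0$ forces $j \in I^+$ and $\tilde{x}_j = 1$; or $\psi = \NOT x_j$ (Case~B), where $\bar{a}_j = 1$ forces $j \in I^-$ and $\tilde{x}_j = 0$.

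The monotone proof would now finish by simply dropping the $c_j x_j$ summand to obtain a pitch-$\pitch$ inequality. The direct analogue here---namely $L_0:\, \sum_{i \in I^+\setminus\{j\}} c_i x_i + \sum_{i \in I^-} c_i (1-x_i) \geqslant \delta - c_j$ in Case~A---is valid for $S$ and violated by $\tilde{x}$, but its notch in the ambient space $[0,1]^n$ need not drop to $\notch$: the definition of notch quantifies over arbitrary $J \subseteq [n]$ rather than $J \subseteq \supp(c)$, so zeroing a coefficient does not automatically shrink the feasible~$J$. I would therefore replace truncation by a \emph{lift}: move $j$ to the opposite side of the partition with a fresh coefficient $\delta - c_j$, strictly positive because $L_0(\tilde{x}) \geqslant 0$ while $L_0(\tilde{x}) < \delta - c_j$. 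Explicitly, in Case~A set
\[
L^{\mathrm{new}}:\quad \sum_{i \in I^+\setminus\{j\}} c_i x_i + \sum_{i \in I^-} c_i(1-x_i) + (\delta - c_j)(1 - x_j) \geqslant \delta - c_j,
\]
and use the symmetric lift $(\delta - c_j)x_j$ (with $j$ moved into $I^+$) in Case~B.

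It remains to check three properties of $L^{\mathrm{new}}$. \emph{Validity for $S$}: in Case~A, rearranging the inequality gives the equivalent form $L(x) \geqslant \delta x_j$, which holds for every $x \in S$ because $L(x) \geqslant \delta$ and $\delta x_j \leqslant \delta$ on the cube; Case~B is symmetric. \emph{Violation at $\tilde{x}$}: since $\tilde{x}_j = 1$ in Case~A, the lifted summand $(\delta - c_j)(1-\tilde{x}_j)$ vanishes and $L^{\mathrm{new}}(\tilde{x}) = L(\tilde{x}) - c_j < \delta - c_j$; Case~B is symmetric. \emph{Notch at most $\notch$}: for every $J \subseteq [n]$ with $|J| \geqslant \notch$, if $j \notin J$ then $|J \cup \{j\}| \geqslant \notch + 1$ and the original notch bound yields $\sum_{i \in J} c_i = \sum_{i \in J \cup \{j\}} c_i - c_j \geqslant \delta - c_j$, whereas if $j \in J$ the lifted coefficient $\delta - c_j$ on~$j$ alone supplies the required lower bound. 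Hence $L^{\mathrm{new}}$ is a notch-at-most-$\notch$ inequality valid for $S$ but violated by $\tilde{x} \in Q$, contradicting the hypothesis on $Q$; the extended-formulation bound is inherited verbatim from Proposition~\ref{prop:xc_upper_bound}. The step I expect to be the real obstacle---and the only meaningful departure from the monotone argument---is identifying the correct replacement for pitch-reduction at the leaf: naive truncation fails in the notch setting and must be upgraded to a lift that places enough weight on the coordinate $j$ to keep the notch under control in the full ambient space $[0,1]^n$.
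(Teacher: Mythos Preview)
Your proof is correct and follows essentially the same route as the paper: the tree-walk to a leaf is identical, and at the leaf you perform the same ``lift'' of the index $j$ to the opposite side of the partition to force the notch to drop. The only difference is cosmetic---the paper lifts with coefficient $\delta$ rather than your $\delta - c_j$---and either choice makes the validity, violation, and notch checks go through.
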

\begin{proof}
The proof is almost identical to that of Theorem~\ref{thmMainPitch}. Instead of repeating the whole proof, here we only explain the differences. The starting point is a notch-$(\notch+1)$ inequality 
\begin{equation}
    \label{eqStandardFormBis}
    \sum_{i \in I^+} c_i x_i + \sum_{i \in I^-} c_i (1 - x_i) \geqslant \delta\,,
\end{equation}
where $I^+ \subseteq [n] $ and $I^- \subseteq [n] $ form a partition of $ [n] $ (i.e., $ I^+ \cap I^- = \emptyset $ and $ I^+ \cup I^- = [n] $), $\delta >0$,  and $c_i \geqslant 0$ for all $i \in [n]$.  Contrary to the previous proof, here we allow $c_i=0$.  Let $a \in \{0,1\}^n$ be the characteristic vector of $I^-$.  Notice that $a$ violates \eqref{eqStandardFormBis}. This implies $\phi(a) = 0$. 

Let $T$ be a tree that represents the formula $\phi$. 
Using the same proof strategy, we find a leaf node $\psi = x_j$ or $\psi = \NOT x_j$ of $T$ such that $\psi(a) = 0$, and there exists a point $\tilde{x} = \tilde{x}(\psi) \in \psi(Q)$ that violates \eqref{eqStandardFormBis}.

If $\psi = x_j$, then $j \in I^+$ and we consider the valid inequality 
\begin{equation*} 
    \sum_{\substack{i \in I^+ \\ i \neq j}} c_i x_i + \sum_{i \in I^-} c_i (1 - x_i) + \delta (1-x_j) \geqslant \delta - c_j\,.
\end{equation*}
Otherwise, $\psi = \NOT x_j$ and thus $j \in I^-$. In this case, we consider the valid inequality
\begin{equation*}
    \sum_{i \in I^+} c_i x_i + \sum_{\substack{i \in I^- \\ i \neq j}} c_i (1 - x_i) + \delta x_j \geqslant \delta - c_j\,.
\end{equation*}
Since \eqref{eqStandardFormBis} is a notch-$(\notch+1)$ inequality, it is easy to check that the notch of both of the above inequalities is at most $\nu$. However, they are violated by the point $\tilde{x} = \tilde{x}(\psi) \in Q$. As in the proof of Theorem~\ref{thmMainPitch}, this gives the desired contradiction.
\end{proof}

Setting $ \phi^1(Q) := \phi(Q) $ and $ \phi^{\ell + 1}(Q) := \phi(\phi^\ell(Q)) $ for $ \ell \in \Z_{\ge 1} $, and using the trivial fact that the notch of a non-trivial inequality is at most $ n $, we immediately obtain the following corollary.
\begin{cor} \label{cor:complete}
    Let $ \phi $ be a reduced Boolean formula defining a set $ S \subseteq \{0,1\}^n $ and let $ Q \subseteq [0,1]^n $ be any convex set containing $ S $.
    Then we have $ \phi^n(Q) = \conv(S) $. 
\end{cor}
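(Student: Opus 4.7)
The plan is to iterate Theorem~\ref{thmMainNotch} exactly $n$ times, starting from the trivial observation that every notch-$0$ inequality is automatically satisfied by $Q$, and then to argue that at notch level $n$ we have already captured every valid inequality for $\conv(S)$. I assume $S \neq \emptyset$ (the empty case should be handled separately, e.g., by noting that $\phi(\emptyset) = \emptyset$).

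First I would handle the base case. An inequality in standard form with notch $0$ requires $\sum_{j \in J} c_j \geqslant \delta$ for every $J \subseteq [n]$ with $|J| \geqslant 0$, in particular for $J = \emptyset$. Together with the standard-form convention $\delta \geqslant 0$, this forces $\delta = 0$, so the inequality is of the shape $\sum_{i \in I^+} c_i x_i + \sum_{i \in I^-} c_i (1-x_i) \geqslant 0$ with nonnegative coefficients, and hence is trivially valid on all of $[0,1]^n \supseteq Q$. So $Q$ satisfies every valid notch-$0$ inequality.

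Now I would iterate. By Proposition~\ref{propRelax}, $\phi^\ell(Q)$ is convex and sandwiched between $S$ and $Q$, so at every stage the hypothesis ``convex set containing $S$'' required by Theorem~\ref{thmMainNotch} is preserved. A straightforward induction on $\ell$ then gives that $\phi^\ell(Q)$ satisfies every valid inequality of notch at most $\ell$; at $\ell = n$, the set $\phi^n(Q)$ satisfies every valid inequality of notch at most $n$. The crucial auxiliary fact, as signalled by the paper, is that every valid inequality in standard form for a non-empty $S$ has notch at most $n$: for any $s \in S$, setting $J := \{i \in I^+ : s_i = 1\} \cup \{i \in I^- : s_i = 0\} \subseteq [n]$ gives $\sum_{j \in J} c_j \geqslant \delta$, and since $|[n]| = n$ the choice $\nu = n$ works. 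Hence $\phi^n(Q) \subseteq \conv(S)$.

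The reverse inclusion $\conv(S) \subseteq \phi^n(Q)$ is immediate: Proposition~\ref{propRelax} gives $S \subseteq \phi(Q)$, and iterating gives $S \subseteq \phi^n(Q)$; convexity of $\phi^n(Q)$ then yields the claim. I do not anticipate a genuine obstacle here — the only point that deserves a moment of care is verifying that the ``$Q \subseteq [0,1]^n$ containing $S$'' hypothesis is preserved through the iteration, but this is exactly what Proposition~\ref{propRelax} provides.
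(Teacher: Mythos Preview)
Your proposal is correct and follows exactly the approach the paper indicates: iterate Theorem~\ref{thmMainNotch} starting from the trivial notch-$0$ base case, and use the fact that every non-trivial valid inequality has notch at most $n$. The paper gives no further argument beyond the one-line remark preceding the corollary, so your write-up simply spells out the details (including the use of Proposition~\ref{propRelax} to preserve the hypotheses through the iteration and to obtain the reverse inclusion).
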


Another consequence of Theorem~\ref{thmMainNotch} is that integer points not belonging to $ S $ are already excluded from $ \phi(Q) $.
\begin{cor}
    Let $ \phi $ be a reduced Boolean formula defining a set $ S \subseteq \{0,1\}^n $ and let $ Q \subseteq [0,1]^n $ be any convex set containing $ S $.
    Then we have $ \phi(Q) \cap \Z^n = S $.
\end{cor}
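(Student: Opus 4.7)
The plan is to deduce this corollary by applying Theorem~\ref{thmMainNotch} once with $\notch = 0$, using a very simple separating inequality. The inclusion $S \subseteq \phi(Q) \cap \Z^n$ is immediate from Proposition~\ref{propRelax}: since $S \subseteq Q$ by hypothesis and $\phi(x) = 1$ for every $x \in S$, we get $S \subseteq Q \cap \phi^{-1}(1) \subseteq \phi(Q)$, and trivially $S \subseteq \Z^n$.

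For the reverse inclusion, I would pick any $a \in \phi(Q) \cap \Z^n$. Because $\phi(Q) \subseteq Q \subseteq [0,1]^n$, we actually have $a \in \{0,1\}^n$. Assume for contradiction that $a \notin S$. The key step is to exhibit a valid inequality for $S$ of small notch that separates $a$ from $\phi(Q)$. The natural candidate is the ``Hamming distance from $a$'' inequality
\begin{equation*}
    \sum_{i : a_i = 0} x_i + \sum_{i : a_i = 1} (1 - x_i) \geqslant 1,
\end{equation*}
which is in standard form with all coefficients equal to $1$ and $\delta = 1$. On $y \in \{0,1\}^n$, the left-hand side equals the Hamming distance between $y$ and $a$, so the inequality holds on $\{0,1\}^n \setminus \{a\}$ and hence on $S$. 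Its notch is exactly $1$, since $\sum_{j \in J} 1 \geqslant 1$ precisely when $|J| \geqslant 1$. By construction it is violated at $x = a$.

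It then remains to invoke Theorem~\ref{thmMainNotch} with $\notch = 0$. Any inequality in standard form valid for $S$ whose notch is at most $0$ must satisfy $\delta \leqslant 0$ (taking $J = \emptyset$), and is therefore trivially valid on $[0,1]^n \supseteq Q$. Hence $Q$ satisfies all notch-$0$ inequalities valid for $S$, and the theorem guarantees that $\phi(Q)$ satisfies all notch-$1$ inequalities valid for $S$. Applied to the separating inequality above, this contradicts $a \in \phi(Q)$, completing the proof. There is no real obstacle here: the only thing to spot is that a notch-$1$ inequality is enough to isolate any single forbidden $0/1$-point, so a single application of Theorem~\ref{thmMainNotch} (the base case of the hierarchy) already rules out all of $\{0,1\}^n \setminus S$.
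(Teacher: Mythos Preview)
Your proof is correct and follows essentially the same approach as the paper: both use the Hamming-distance inequality $\sum_{i:a_i=0} x_i + \sum_{i:a_i=1}(1-x_i) \geqslant 1$, observe it has notch~$1$, and invoke Theorem~\ref{thmMainNotch} to conclude it is valid for $\phi(Q)$. You are somewhat more explicit than the paper in two places --- you spell out the forward inclusion $S \subseteq \phi(Q) \cap \Z^n$ via Proposition~\ref{propRelax}, and you verify that the hypothesis of Theorem~\ref{thmMainNotch} holds for $\notch=0$ --- but the argument is the same.
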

\begin{proof}
    It suffices to show that no point from $ \{0,1\}^n \setminus S $ is contained in $ \phi(Q) $.
    To this end, fix $ \bar x \in \{0,1\}^n \setminus S $ and consider the inequality
    \[
        \sum_{i \in [n] : \bar{x}_i = 0} x_i + \sum_{i \in [n] : \bar{x}_i = 1} (1 - x_i) \geqslant 1\,,
    \]
    which is violated by $ \bar x $, but valid for all other points of $\{0,1\}^n$.
    Since the inequality has notch $ 1 $, by Theorem~\ref{thmMainNotch} it is also valid for $ \phi(Q) $ and hence $ \bar x $ is not contained in $ \phi(Q) $.
\end{proof}

\section{Applications}
\label{secApplications}
In this section, we present several applications of our procedure, in which we repeatedly make use of Theorems~\ref{thmMainPitch} and~\ref{thmMainNotch}. 
\subsection{Monotone formulas for matching}
As a first application, we demonstrate how our findings together with Rothvo\ss' result~\cite{Rothvoss17JACM} on the extension complexity of the matching polytope yield a very simple proof of a seminal result of Raz \& Wigderson~\cite[Theorem 4.1]{RW92}, which states\footnote{The original result of Raz \& Wigderson states that the depth of any monotone circuit computing the mentioned function is $ \Omega(n) $, which is equivalent to the mentioned result, see, e.g., \cite{wegener1983relating}.} that any \emph{monotone} Boolean formula deciding whether a graph on $ n $ nodes contains a perfect matching has size $ 2^{\Omega(n)} $. Before giving any further detail, we point out that Raz \& Wigderson's result extends to the bipartite case~\cite[Theorem 4.2]{RW92}, which is not the case of the polyhedral approach described below.

The fact that Rothvo\ss' theorem implies  Raz \& Wigderson's was first discovered by G{\"o\"o}s, Jain and Watson~\cite{GJW2016}.
While their arguments are based on connections between nonnegative ranks of certain slack matrices and Karchmer-Wigderson games, which implicitly play an important role in the proofs of Theorems~\ref{thmMainPitch} and \ref{thmMainNotch}, our results yield a straightforward proof that does not require any further notions.

To this end, let $ n \in \Z_{\ge 2} $ be even and let $ G = (V, E) $ denote the complete undirected graph on $ n $ nodes.
The set $ S $ considered by Raz \& Wigderson is the set
\[
    S := \{ x \in \{0,1\}^E : \supp(x) \subseteq E \text{ contains a perfect matching} \}.
\]
Let $ \phi $ be any monotone Boolean formula in variables $ x_e $ ($ e \in E $) that defines $ S $.
Next, define the polytope
\[
    \label{eqOddCutInequalities}
    P := \{ x \in [0,1]^E : x(\delta(U)) \geqslant 1 \text{ for every } U \subseteq V \text{ with } |U| \text{ odd} \}.
\]
It is a basic fact that $ S $ is contained in $ P $.
Furthermore, observe that every non-trivial inequality in the definition of $ P $ has pitch $ 1 $.
Thus, we have $ \conv(S) \subseteq \phi([0,1]^E) \subseteq P $.
Moreover, if we consider the affine subspace
\[
    D := \{ x \in \R^E : x(\delta(\{u\}) = 1 \text{ for every } u \in V \},
\]
it is well-known that both $ \conv(S) \cap D $ and $ P \cap D $ are equal to the perfect matching polytope of $ G $, and hence we obtain that  $ \phi([0,1]^E) \cap D $ is also equal to the perfect matching polytope of $ G $.
By Rothvo\ss' result, this implies $ \xc(\phi([0,1]^E)) = 2^{\Omega(n)} $.
On the other hand, by Proposition~\ref{prop:xc_upper_bound} we also have $ \xc(\phi([0,1]^E)) \le |\phi| \cdot \xc([0,1]^E) = |\phi| \cdot  2 |E| \le n^2 |\phi| $ and hence $ |\phi| $ must be exponential in $ n $.

\subsection{Covering problems: the binary case}
In this section, we consider sets $ S \subseteq \{0,1\}^n $ that arise from $ 0/1 $-covering problems, in which there is a matrix $ A \in \{0,1\}^{m \times n} $ such that $ S = \{ x \in \{0,1\}^n : Ax \geqslant e \} $, where $ e $ is an all-ones vector.
As an example, if $ A $ is the node-edge incidence matrix of an undirected graph $ G $, then the points of $ S $ correspond to vertex covers in $ G $.
This shows that, in general, the convex hull of such sets $ S $ may not admit polynomial-size (in $ n $) extended formulations, see, e.g.,~\cite{bazzi2015no,FMPTW15,GJW2016}.

Moreover, general $ 0/1 $-hierarchies may have difficulties identifying basic inequalities even in simple instances.
For example, in~\cite{BZ2006} it is shown that if $ Ax \geqslant e $ consists of the inequalities $ \sum_{i \in [n] \setminus \{j\}} x_i \geqslant 1 $ for each $ j \in [n] $, then it takes at least $ n - 2 $ rounds of the Lov\'asz-Schrijver or Sherali-Adams hierarchy to satisfy the pitch-2 inequality $ \sum_{i \in [n]} x_i \geqslant 2 $.

By developing a hierarchy tailored to $ 0/1 $-covering problems, Bienstock \& Zuckerberg~\cite{BZ2004} were able to bypass some of these issues.
As their main result, for each $ k \in \Z_{\ge 1} $, they construct a polytope $ f^k(Q) $ containing $ S $ satisfying the following two properties.
First, every inequality of pitch at most $ k $ that is valid for $ S $ is also valid for $ f^k(Q) $.
Second, $ f^k(Q) $ can be described by an extended formulation of size $ (m + n)^{g(k)} $, where $ g(k) = \Omega(k^2) $.

However, constructing the polytope $ f^k(Q) $ is quite technical and involved.
Prior to our work, it had been simplified by Mastrolilli~\cite{Mastrolilli17} by using a modification of the Sherali-Adams hierarchy that is based on appropriately defined high-degree polynomials.
However, the extended formulations in~\cite{Mastrolilli17} are not smaller than those of Bienstock \& Zuckerberg~\cite{BZ2004}.

In contrast, our procedure directly implies significantly simpler and smaller extended formulations that satisfy all pitch-$ k $ inequalities.
Indeed, consider the canonical monotone formula \eqref{eq:canonical_formula}.
Theorem~\ref{thmMainPitch} gives a polyhedral relaxation $ \phi^k([0,1]^n) $ of $ S $ whose points satisfy all valid inequalities of pitch at most $ k $, and that can be defined by an extended formulation of size at most $2n \cdot (mn)^k$.

\subsection{Covering problems: bounded coefficients}
Next, we consider a more general form of a covering problem in which $ S = \{ x \in \{0,1\}^n : Ax \geqslant b \} $ for some non-negative integer matrix $ A \in \Z_{\ge 0}^{m \times n} $ and $ b \in \Z_{\ge 1}^m $.
We first restrict ourselves to the case that all entries in $ A $ and $ b $ are bounded by some constant $ \Delta \in \Z_{\ge 2} $.

Based on their results in~\cite{BZ2004}, Bienstock \& Zuckerberg~\cite{BZ2006} provide an extended formulation of size $ O(m + n^\Delta)^{g(k)} $, where $ g(k) = \Omega(k^2) $.  Our method yields a significantly smaller extended formulation, via the following lemma.

\begin{lem} \label{lemMonotoneBounded}
For every $ A \in \Z_{\ge 0}^{m \times n} $ and $ b \in \Z_{\ge 1}^m $ with entries bounded by $ \Delta $, the set $ S = \{ x \in \{0,1\}^n : Ax \geqslant b \} $ can be defined by a monotone formula $ \phi $ of size at most $ \Delta^{5.3} m n \log^{O(1)}(n) $. Moreover, this formula can be constructed in randomized polynomial time.
\end{lem}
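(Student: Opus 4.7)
The plan is to reduce the lemma to the construction of a monotone formula for a single threshold predicate, and then apply a Valiant-style randomized construction. First, since $S = \bigcap_{i=1}^{m} \{x \in \{0,1\}^n : A_i x \geqslant b_i\}$, where $A_i$ denotes the $i$-th row of $A$, setting $\phi := \bigwedge_{i=1}^{m} \phi_i$ reduces the task to producing, for each row, a monotone formula $\phi_i$ of size at most $\Delta^{5.3} n \log^{O(1)}(n)$ in randomized polynomial time. So fix one row and write its predicate as $\sum_{j=1}^n a_j x_j \geqslant b$ with $a_j \in \{0,\dotsc,\Delta\}$ and $b \in \{1,\dotsc,\Delta\}$.

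Second, I would recast this predicate as an unweighted threshold. List each variable $x_j$ exactly $a_j$ times to form a multiset of $N := \sum_j a_j \leqslant \Delta n$ \emph{slots}; the predicate is then equivalent to the threshold function $T_b^N$ applied to the slot-values. Any monotone formula $F$ for $T_b^N$ whose leaves are the slots, of size $s$, lifts (by replacing each slot-leaf with the underlying $x_j$ it copies) to a monotone formula of size $s$ for the single-row predicate. So the problem reduces to producing a monotone formula of size $O(\Delta^{5.3} n \log^{O(1)}(n))$ for $T_b^N$ with $b \leqslant \Delta$ and $N \leqslant \Delta n$.

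Third, I would invoke a randomized construction of a monotone formula for $T_b^N$ of size $b^{4.3} \cdot N \cdot \log^{O(1)}(N)$, a sharpening of Valiant's classical random AND-OR tree construction (which computes majority in size $O(N^{5.3})$) that isolates the large exponent on the threshold $b$ rather than on $N$. Such a construction produces the formula in randomized polynomial time: the correctness probability on any single minimal $1$-certificate (i.e., any $b$-subset of $[N]$) is driven above $1 - N^{-2b}$ by a Chernoff concentration argument applied to the amplifier, and a union bound over the at most $\binom{N}{b} \leqslant N^b$ minimal certificates (using $b \leqslant \Delta$) yields total failure probability $o(1)$. Plugging $N \leqslant \Delta n$ and $b \leqslant \Delta$ into the size estimate gives $O(\Delta^{4.3} \cdot \Delta n \cdot \log^{O(1)}(\Delta n)) = O(\Delta^{5.3} n \log^{O(1)}(n))$, and conjoining the $m$ resulting per-row formulas completes the proof.

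The main obstacle is establishing the sharpened threshold-formula bound $O(b^{4.3} N \log^{O(1)}(N))$ rather than the naive $O(N^{5.3})$, which would cost a prohibitive $n^{5.3}$ factor instead of the desired near-linear dependence on $n$. The trick is to exploit that each minimal $1$-certificate has size at most $b \leqslant \Delta$, so one can run Valiant's amplifier on random samples of only $\tilde{O}(\Delta)$ slots (each such amplifier contributing only $\tilde{O}(\Delta^{4.3})$ leaves) and then aggregate $\tilde{O}(N/\Delta)$ independent amplifiers via an outer low-size monotone tree, arranging that every minimal certificate is hit by enough amplifiers for the Chernoff-plus-union-bound analysis to go through. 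Balancing sample size, amplifier depth, and number of amplifiers so that correctness holds while keeping the total leaf count at $O(\Delta^{5.3} n \log^{O(1)}(n))$ is the technical heart of the argument; once this is done, the randomized polynomial-time guarantee is immediate from the uniformity of the random choices involved.
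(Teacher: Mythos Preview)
Your approach is essentially the same as the paper's: take the conjunction over rows, duplicate each variable $x_j$ exactly $a_j$ times to reduce each row to an unweighted threshold $T_b^N$ with $N \leqslant \Delta n$ and $b \leqslant \Delta$, and then invoke a monotone threshold formula of size $b^{4.3}\, N\, \log^{O(1)} N$. The paper carries out exactly these reductions and then simply cites the construction of Hoory, Magen and Pitassi~\cite{HMP06} for the required bound $|\psi_i| \leqslant b_i^{4.3}\, n'\, \log^{O(1)}(n'/b_i)$, which plugs in to give $\Delta^{5.3} n \log^{O(1)}(n)$ per row.

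The one substantive difference is that you attempt to re-derive the $b^{4.3} N \log^{O(1)} N$ threshold bound from scratch via a ``sharpened Valiant amplifier on random $\tilde{O}(\Delta)$-size samples, aggregated by an outer tree.'' This outline is in the right spirit, but as written it is not a proof: you do not specify the amplifier, the outer aggregation, or the parameters precisely enough to verify the claimed size, and getting the exponent down to $4.3$ (rather than Valiant's $5.3$) is genuinely delicate. Since this is precisely what~\cite{HMP06} establishes (including the randomized polynomial-time constructibility), the cleanest repair is to replace your last paragraph by a direct citation, as the paper does. With that change your argument and the paper's coincide.
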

\begin{proof}
Fix $ i \in [m] $, let $ n' := \sum_{j=1}^n A_{ij} $ and let $ \psi_i $ be a monotone formula defining the set $ \{ y \in \{0,1\}^{n'} : \sum_{k=1}^{n'} y_k \ge b_i\} $. Next, pick any function $ h : [n'] \to [n] $ such that $ |h^{-1}(j)| = A_{ij} $ for all $ j \in [n] $. In formula $\psi_i$, replace every occurrence of $ y_{k} $ by $ x_{h(k)} $, for $ k \in [n'] $. We obtain a monotone formula $\phi_i$ defining the set $ \{x \in \{0,1\}^n : \sum_{j=1}^n A_{ij} x_j \ge b_i\} $. By using the construction of Hoory, Magen and Pitassi~\cite{HMP06} for the initial formula $ \psi_i $, the resulting formula $ \phi_i $ has size
$$
|\phi_i| = |\psi_i| \leq \Delta^{4.3} n' \log^{O(1)}(n'/\Delta) \leq \Delta^{5.3} n \log^{O(1)} (n)\,,
$$
since $ n' \leq n \Delta $ and $ b_i \leq \Delta $.

The result follows by taking $ \phi := \bigwedge_{i=1}^m \phi_i $.
\end{proof}

Letting $\phi$ be the formula from Lemma~\ref{lemMonotoneBounded} and applying Theorem~\ref{thmMainPitch}, the relaxation
$ \phi^k([0,1]^n) $ of $ S $ satisfies all valid pitch at most $k$ inequalities for $S$, and  $ \phi^k([0,1]^n) $ can be defined by an extended formulation of size at most $2n \cdot (\Delta^{5.3} m n \log^{O(1)}(n))^k$.

\subsection{Covering problems: the general case}
In some cases, especially when $ m = O(1) $, the matrix $ A \in \Z_{\ge 0}^{m \times n} $ and vector $ b \in \Z_{\ge 0}^m $ may have coefficients as large as $ 2^{\Omega(n \log n)} $.
For such general instances, we can improve the bound of the previous paragraph.
Beimel \& Weinreb~\cite{beimel2006monotone} show that, for every $ a_1,\dotsc,a_n,\delta \in \R_{\ge 0} $, the set $ \{ x \in \{0,1\}^n : \sum_{j=1}^n a_j x_j \geqslant \delta \} $ can be decided by a monotone formula of size $ n^{O(\log n)} $.
Thus, defining $ \phi $ as a conjunction of such formulas for each inequality in $ Ax \geqslant b $, we obtain a monotone formula deciding $ S $ of size $ m n^{O(\log n)} $.
Thus, again by Theorem~\ref{thmMainPitch}, the polytope $ \phi^k([0,1]^n) $ provides a relaxation for $ S $ that satisfies all valid pitch at most $ k $ inequalities for $S$, and that can be defined by an extended formulation of size at most $ \left( m n^{O(\log n)} \right)^k $.
In comparison, for this general case, Bienstock \& Zuckerberg~\cite{BZ2004} have no nontrivial upper bound.

\subsection{Constant notch $0/1$-sets}

In this section, we consider non-empty sets $S \subseteq \{0,1\}^n$ with constant notch $ \notch(S) $.
These sets have several desirable properties.
For example, as noted in~\cite{BFHS2017} (and implicitly in~\cite{CL2016}), there is an easy polynomial-time algorithm to optimize a linear function over a constant notch set $S$, provided that we have a polynomial-time membership oracle for $S$.
On the other hand, sets with constant notch do not necessarily admit small extended formulations.  
Indeed, counting arguments developed in~\cite{Rothvoss13MPA,AKW2016} show that even for a ``generic'' set $ S \subseteq \{0,1\}^n $ with notch $ \notch(S) = 1 $, $\conv(S)$ requires extended formulations of size $ 2^{\Omega(n)} $.

This raises the question of which constant notch sets \emph{do} admit compact extended formulations.  Theorem~\ref{thmMainNotch} provides a nice partial answer: if $S$ has constant notch and a small formula description, then $\conv(S)$ has small extension complexity. In particular, if $S$ has constant notch and $S$ can be described by a formula $\phi$ of polynomial-size (in $n$), then $\conv(S)$ can be described by a polynomial-size extended formulation. Notice that every explicit $0/1$-set $S$ of constant notch such that $\xc(\conv(S))$ is large would thus provide an explicit Boolean function requiring large depth circuits, and solve one of the hardest open problems in circuit complexity.

\section{Concluding remarks}
In this paper, we propose a new method for strengthening convex relaxations of $0/1$-sets. Given a Boolean formula defining the $0/1$-set and a relaxation contained in the $0/1$-cube, we obtain a new relaxation by ``feeding'' the relaxation in the formula. This provides the current simplest and smallest linear extended formulations expressing inequalities of constant pitch in the monotone case, and constant notch in the general case.

For covering problems, we obtain extended formulations whose size is often vastly smaller than those of Bienstock \& Zuckerberg~\cite{BZ2004,BZ2006}. This is possible since we allow \emph{any} monotone formula, and can thus use any known construction from the literature. In contrast, Bienstock \& Zuckerberg implicitly only consider formulas in conjunctive normal form. The number of clauses in every formula in conjunctive normal form is at least the number of minimal covers\footnote{A point $x \in \{0,1\}^n$ is called a \emph{min-term}, and its characteristic vector a \emph{minimal cover}, if $x$ is a minimal element of $S$ with respect to the component-wise order $\leqslant$.}, which makes it impossible for them to construct small extended formulations in situations where the number of minimal covers is large.

To conclude, we state a few open questions raised by our work.

\begin{enumerate}
    \item Do the new extended formulations lead to any new interesting algorithmic application, in particular for covering problems? This appears to be connected to the following question. How good are the lower bounds obtained after performing a few rounds of the Chv\'atal-Gomory closure? For some problems, such as the vertex cover problem in graphs or more generally in $q$-uniform hypergraphs with $q = O(1)$, the bounds turn out to be quite poor in the worst case~\cite{ST10,bazzi2015no}. The situation is less clear for other problems, such as network design problems. Recent work~\cite{FGKS17} on the tree augmentation problem uses certain inequalities from the first Chv\'atal-Gomory closure in an essential way. For the related $2$-edge connected spanning subgraph problem, our work implies that one can approximately optimize over the $\ell$-th Chv\'atal-Gomory closure in quasi-polynomial time, for $\ell = O(1)$.
    
    \item For which classes of polytopes in $ [0,1]^n $ can one approximate a constant number of rounds of the Chv\'atal-Gomory closure with compact extended formulations? Recent results of Mastrolilli~\cite{Mastrolilli17} show that this is possible for packing problems, via a compact \emph{positive semi-definite} extended formulation.
    
    \item Can one find polynomial-size monotone formulas for any nonnegative weighted threshold function, that is, for every min-knapsack $\{x \in \{0,1\}^n : \sum_{i=1}^n a_i x_i \geqslant \beta\}$? This would improve on the $n^{O(\log n)}$ upper bound by Beimel and Weinreb~\cite{beimel2006monotone}. Klabjan, Nemhauser and Tovey show that separating pitch-$1$ inequalities for such sets is NP-hard~\cite{KNT98}. However, this does not rule out a polynomial-size extended formulation defining a relaxation that would be stronger than that provided by pitch-$1$ inequalities.
\end{enumerate}




\section{Acknowledgments}
This work was done while Samuel Fiorini was visiting the Simons Institute for the Theory of Computing. It was supported in part by the DIMACS/Simons Collaboration on Bridging Continuous and Discrete Optimization through NSF grant \#CCF-1740425, and in part by ERC Consolidator Grant 615640-ForEFront. 

\bibliography{references}{}
\bibliographystyle{amsplain}

\end{document}